\newtheorem*{definition}{Definition}
\newtheorem*{convention}{Convention}
\newtheorem{theorem}{Theorem}
\newtheorem{lemma}[theorem]{Lemma}
\newtheorem{corollary}[theorem]{Corollary}
\title{Embeddings into the countable atomless Boolean algebra}
\author{Stijn Vermeeren}
\date{\today}
\begin{document}

\maketitle

\begin{abstract}
We prove that there is a lattice embedded from every countable distributive lattice into the Boolean algebra of computable subsets of $\mathbb{N}$. Along the way, we discuss all relevant results about lattices, Boolean algebras and embeddings between them.
\end{abstract}

The objective of this article is to prove that every countable distributive lattice can be embedded into the Boolean algebra of computable subsets of $\mathbb{N}$. I came across this fact in an article by Simpson \cite{simpson}, but he doesn't give a proof nor a reference. And Simpson is not the only one; I repeatedly found the result simply stated as a ``well-known fact''. An article by Ganchev and Soskova \cite{ganchev_soskova}, stating that one can use ``a compactness argument'' to prove the theorem, is the best I could find.

This article is the result of my investigation into the theorem and its proof. None of the results are new, but to my knowledge they have never been as coherently presented before. I define most concepts (such as \emph{lattice} and \emph{Boolean algebra}) from scratch, although very concisely. More background can be found in \cite{gratzer} or \cite{birkhoff}. I assume familiarity with the basic notions of model theory, which are explained e.g.\ in \cite{hodges}. I hope that this article makes the content, which was up to now hard to find and scattered between different resources, more accessible and understandable.

\section{Lattices}

\begin{definition}
A \textbf{lattice} $(L, \wedge, \vee)$ is a set nonempty $L$ with binary operations $\wedge$ (\emph{meet}) and $\vee$ (\emph{join}) that are commutative, associative, idempotent (i.e.\ $x \wedge x = x$ and $x \vee x = x$ for all $x \in L$) and satisfy two supplementary \emph{absorption} axioms:
\begin{align*}
(x \wedge y) \vee y &= y \\
(x \vee y) \wedge y &= y
\end{align*}
for all $x,y \in L$.\footnote{The absorption axioms actually imply idempotency. This is proved by simplifying $((x \wedge x) \vee x) \wedge x$ and $((x \vee x) \wedge x) \vee x$ in two ways. Still, idempotency is ussually included in the axioms.}
\end{definition}

Alternatively, a lattice can also be defined as a partially ordered set $(L, \le)$ such that each two elements $x,y \in L$ have a greatest lower bound (\emph{meet}) $x \wedge y$ and a least upper bound (\emph{join}) $x \vee y$. This obviously gives a structure $(L, \wedge, \vee)$ that satisfies the above definition of lattice. Conversely, if for elements $x,y$ of a lattice $(L, \wedge, \vee)$ we define $x \le y$ if and only if $x = x \wedge y$, then this gives a partial order with greatest lower bounds given by $\wedge$ and least upper bounds given by $\vee$, as is easily proved from the axioms.

Viewing a lattice as a partial order with binary meets and joins is often helpful in visualising the lattice, especially when it is finite. For example, the lattice of divisors of $12$, where $\wedge$ is greatest common divisor and $\vee$ is least common multiple, is most clearly represented when we consider the divisors to be ordered by divisibility and draw a diagram as follows:
\begin{center}
	\psset{unit=0.8cm}
\begin{pspicture}(0,0.7)(9,4.5)

{
\cnode(5,1){2.3pt}{n1}
\uput[dl](5,1){$1$}

\cnode(4,2){2.3pt}{n2}
\uput[dl](4,2){$2$}

\cnode(3,3){2.3pt}{n4}
\uput[dl](3,3){$4$}

\cnode(6,2){2.3pt}{n3}
\uput[ur](6,2){$3$}

\cnode(5,3){2.3pt}{n6}
\uput[ur](5,3){$6$}

\cnode(4,4){2.3pt}{n12}
\uput[ur](4,4){$12$}
}

\ncline{n1}{n2}
\ncline{n1}{n3}
\ncline{n2}{n6}
\ncline{n2}{n4}
\ncline{n3}{n6}
\ncline{n4}{n12}
\ncline{n6}{n12}

\end{pspicture}

\end{center}
Gowing up along the line segments in the drawing corresponds to going to greater elements in the order. The binary meets and joins can also be read directly from the diagram.

However, we want a homomorphism $f: L \to K$ of lattices to preserve meets and joins:
\begin{align*}
f(x \wedge y) &= f(x) \wedge f(y) \\
f(x \vee y) &= f(x) \vee f(y)
\end{align*}
for all $x,y \in L$. Hence we need to include $\wedge$ and $\vee$ as primitive function symbols in the first order language of lattice theory.\footnote{We must be careful to distinguish the different possible meanings of $\wedge$ (\emph{conjunction} or \emph{meet}) and $\vee$ (\emph{disjunction} or \emph{join}). When formally defining the first order language of lattice theory, we must choose different symbols for the different meanings. However in this article, we won't need any conjunction or disjunctions symbols, so $\wedge$ and $\vee$ will always mean respectively \emph{meet} and \emph{join}.} If we would include only $\le$ as function symbol and ensure the existence of joins and meets through axioms, then the usual definition of homomorphism (see e.g.\ \cite{hodges}) would include a map like
\begin{center}
	\psset{unit=0.8cm}
\begin{pspicture}(0,0.7)(10,4.3)

{
\cnode(2,1){2.3pt}{na}
\uput[dr](2,1){$a$}

\cnode(1,2){2.3pt}{nb}
\uput[dl](1,2){$b$}

\cnode(3,2){2.3pt}{nc}
\uput[dr](3,2){$c$}

\cnode(2,4){2.3pt}{nd}
\uput[ur](2,4){$d$}
}

\ncline{na}{nb}
\ncline{na}{nc}
\ncline{nb}{nd}
\ncline{nc}{nd}

{ 
\cnode(8,1){2.3pt}{nfa}
\uput{1.5pt}[dr](8,1){$f(a)$}

\cnode(7,2){2.3pt}{nfb}
\uput{1.8pt}[dl](7,2){$f(b)$}

\cnode(9,2){2.3pt}{nfc}
\uput{1.5pt}[dr](9,2){$f(c)$}

\cnode(8,3){2.3pt}{nx}

\cnode(8,4){2.3pt}{nfd}
\uput[ur](8,4){$f(d)$}
}

\ncline{nfa}{nfb}
\ncline{nfa}{nfc}
\ncline{nfb}{nx}
\ncline{nfc}{nx}
\ncline{nx}{nfd}

\pcline{->}(4,2.5)(6,2.5)
\aput{:U}{$f$}

\end{pspicture}

\end{center}
which preserves order but does not preserve the join of $b$ and $c$.

A lattice $(L, \wedge, \vee)$ is \textbf{distributive} if $\wedge$ distributes over $\vee$ and vice versa, i.e.
\begin{align*}
	(x \vee y) \wedge z &= (x \wedge z) \vee (y \wedge z) \\
	(x \wedge y) \vee z &= (x \vee z) \wedge (y \vee z)
\end{align*}
for all $x,y,z \in L$.

The smallest example of a \emph{non}-distributive lattice is the \textbf{diamond lattice} $M_3$:
\begin{center}
	\psset{unit=0.8cm}
\begin{pspicture}(1,0.9)(9,4.1)

{
\cnode(5,1){2.3pt}{n0}
\cnode(4,2.5){2.3pt}{na}
\cnode(5,2.5){2.3pt}{nb}
\cnode(6,2.5){2.3pt}{nc}
\cnode(5,4){2.3pt}{n1}
}

\ncline{n0}{na}
\ncline{n0}{nb}
\ncline{n0}{nc}
\ncline{na}{n1}
\ncline{nb}{n1}
\ncline{nc}{n1}

\end{pspicture}

\end{center}

A \textbf{lattice of sets} is a collection of sets which is closed under binary intersections (meets) and binary unions (joins). Any lattice of sets is distributive. In fact, every distributive lattice is isomorphic to a lattice of sets. We will prove this in the next section for finite distributive lattices. The infinite distributive lattices, a proof is given in the appendix.

\section{Birkhoff's representation theorem}

Birkhoff's representation theorem says that every finite distributive lattice is isomorphic to a lattice of sets. In order to prove this, we need to introduce a new concept:

\begin{definition}
Suppose we have a lattice $(L, \wedge, \vee)$. A non-minimal element $a \in L$ is
\begin{itemize}
  \setlength{\itemsep}{0pt}
\item \textbf{join-irreducible} if for all $b,c \in L$, when $a = b \vee c$, then $a = b$ or $a = c$;
\item \textbf{join-prime} if for all $b,c \in L$, when $a \le b \vee c$, then $a \le b$ or $a \le c$.
\end{itemize}
\end{definition}

In the non-distributive diamond lattice
\begin{center}
	\psset{unit=0.8cm}
\begin{pspicture}(1,0.9)(9,4.1)

{
\cnode(5,1){2.3pt}{n0}
\cnode[fillcolor=black,fillstyle=solid](4,2.5){2.3pt}{na}
\uput[r](4,2.5){$a$}
\cnode[fillcolor=black,fillstyle=solid](5,2.5){2.3pt}{nb}
\uput[r](5,2.5){$b$}
\cnode[fillcolor=black,fillstyle=solid](6,2.5){2.3pt}{nc}
\uput[r](6,2.5){$c$}
\cnode(5,4){2.3pt}{n1}
}

\ncline{n0}{na}
\ncline{n0}{nb}
\ncline{n0}{nc}
\ncline{na}{n1}
\ncline{nb}{n1}
\ncline{nc}{n1}

\end{pspicture}

\end{center}
the elements $a$, $b$ and $c$ are join-irreducible, but not join-prime. For example $a \le b \vee c$ but neither $a \le b$ nor $a \le c$.

In the lattice of divisors of $12$, the elements with exactly one prime divisor ($2$, $4$ and $3$) are both the join-irreducibles and the join-primes:
\begin{center}
	\psset{unit=0.8cm}
\begin{pspicture}(0,0.7)(9,4.5)
{
\cnode(5,1){2.3pt}{n1}
\uput[dl](5,1){$1$}

\cnode[fillcolor=black,fillstyle=solid](4,2){2.3pt}{n2}
\uput[dl](4,2){$2$}

\cnode[fillcolor=black,fillstyle=solid](3,3){2.3pt}{n4}
\uput[dl](3,3){$4$}

\cnode[fillcolor=black,fillstyle=solid](6,2){2.3pt}{n3}
\uput[ur](6,2){$3$}

\cnode(5,3){2.3pt}{n6}
\uput[ur](5,3){$6$}

\cnode(4,4){2.3pt}{n12}
\uput[ur](4,4){$12$}
}

\ncline{n1}{n2}
\ncline{n1}{n3}
\ncline{n2}{n6}
\ncline{n2}{n4}
\ncline{n3}{n6}
\ncline{n4}{n12}
\ncline{n6}{n12}

\end{pspicture}

\end{center}

We can indeed prove that join-prime is stronger than join-irreducible in general and that in distributive lattices the two concepts are equivalent.

\begin{lemma}
\label{irr-prime}
\begin{itemize}
  \setlength{\itemsep}{0pt}
\item In a lattice $(L, \wedge, \vee)$, if $a \in L$ is join-prime, then it is join-irreducible.
\item If the lattice is distributive, then the converse holds as well.
\end{itemize}
\end{lemma}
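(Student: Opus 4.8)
The plan is to handle the two bullet points separately, using throughout the order characterisation $x \le y \iff x = x \wedge y$ together with the elementary fact that $b \le b \vee c$ for all $b,c$ (which follows from commutativity and the absorption axiom $(x \vee y) \wedge y = y$).

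For the first bullet, I would suppose $a$ is join-prime and take any $b,c$ with $a = b \vee c$. Then in particular $a \le b \vee c$, so join-primeness gives $a \le b$ or $a \le c$; say $a \le b$. But also $b \le b \vee c = a$, so $a \le b \le a$ forces $a = b$, and the symmetric argument handles the case $a \le c$. Hence $a$ is join-irreducible. Non-minimality carries over directly, since both definitions impose it.

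For the second bullet, I would assume the lattice is distributive and $a$ is join-irreducible, and take any $b,c$ with $a \le b \vee c$. The key move is to rewrite $a$ using the order characterisation and then distribute:
\[
a = a \wedge (b \vee c) = (a \wedge b) \vee (a \wedge c).
\]
Since $a$ is join-irreducible, expressing it as this join forces $a = a \wedge b$ or $a = a \wedge c$, i.e.\ $a \le b$ or $a \le c$. Thus $a$ is join-prime.

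Neither direction presents a serious obstacle; the only things to be careful about are invoking the distributive law in the correct orientation (here $(x \vee y) \wedge z = (x \wedge z) \vee (y \wedge z)$ with $z = a$, combined with commutativity) and confirming that the non-minimality clause of the definitions is preserved, which happens automatically since both notions already require the element to be non-minimal.
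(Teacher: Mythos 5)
Your proposal is correct and follows essentially the same route as the paper's own proof: in the first bullet you apply join-primality to $a = b \vee c$ and combine $a \le b$ with $b \le b \vee c = a$, and in the second you expand $a = a \wedge (b \vee c) = (a \wedge b) \vee (a \wedge c)$ and invoke join-irreducibility. The extra remarks about the order characterisation and non-minimality are harmless elaborations of steps the paper leaves implicit.
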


\begin{proof}
\begin{itemize}
\item Suppose $a \in L$ is join-prime and can be written as 
\[ a = b \vee c \]
for some $b,c \in L$. Applying the join-primality of $a$ gives $a \le b$ or $a \le c$. Suppose without loss of generality that $a \le b$. But we have $b \le a$ as well, because $a$ is the join of $b$ and $c$. Hence $a = b$ and $a$ is join-irreducible.
\item Suppose that $L$ is distributive, that $a \in L$ is join-irreducible and that $a \le b \vee c$ for some $b,c \in L$. So
\[ a = a \wedge (b \vee c) = (a \wedge b) \vee (a \wedge c). \]
As $a$ is join-irreducible, one of $a \wedge b$ and $a \wedge c$ must be equal to $a$. Suppose without loss of generality that $a = a \wedge b$. This means $a \le b$. Hence $a$ is join-prime.
\end{itemize}
\end{proof}

\begin{convention}
In a lattice with a least element $0$, we take an empty join to be equal to $0$:
\[ \bigvee_{p \in \emptyset} p = 0. \]
This applies in particular to finite lattices and Boolean algebras.
\end{convention}

We are now ready to prove \textbf{Birkhoff's representation theorem}:

\begin{theorem} \emph{(Birkhoff 1933, \cite{birkhoffrepr})}
A finite distributive lattice $(L, \wedge, \vee)$ is isomorphic to the lattice of lower (i.e.\ downward closed) sets of join-prime elements.
\end{theorem}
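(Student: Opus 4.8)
The plan is to exhibit an explicit isomorphism. Write $J(L)$ for the set of join-prime elements of $L$; by Lemma \ref{irr-prime} this coincides with the set of join-irreducible elements, since $L$ is distributive, so the set is unambiguous. For each $a \in L$ I would define the set of join-primes below $a$,
\[ \varphi(a) = \{p \in J(L) : p \le a\}, \]
and regard $\varphi$ as a map from $L$ into the lattice of lower sets of $J(L)$ (ordered by inclusion, with meet $\cap$ and join $\cup$). First I would check that $\varphi(a)$ is genuinely a lower set: if $p \in J(L)$ lies below some $q \in \varphi(a)$, then $p \le q \le a$, so $p \in \varphi(a)$. The goal is then to show that $\varphi$ is a bijective lattice homomorphism.

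For the homomorphism property, meet preservation is immediate from the fact that $\wedge$ is the greatest lower bound: $p \le a \wedge b$ holds exactly when $p \le a$ and $p \le b$, so $\varphi(a \wedge b) = \varphi(a) \cap \varphi(b)$. For join preservation the inclusion $\varphi(a) \cup \varphi(b) \subseteq \varphi(a \vee b)$ is trivial, while the reverse inclusion is exactly where join-primality is used: if $p \in J(L)$ and $p \le a \vee b$, then $p \le a$ or $p \le b$, so $p \in \varphi(a) \cup \varphi(b)$. Thus $\varphi(a \vee b) = \varphi(a) \cup \varphi(b)$.

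The crux, and the only place finiteness is genuinely needed, is the lemma that every $a \in L$ is the join of the join-irreducibles below it, i.e.\ $a = \bigvee \varphi(a)$. I would prove this by induction along the (finite, hence well-founded) order $\le$. If $a$ is join-irreducible it lies in $\varphi(a)$ and the claim is clear; the least element $0$ is handled by the empty-join convention, since no join-prime lies below it. Otherwise $a = b \vee c$ with $b, c < a$ strictly, and the induction hypothesis expresses $b$ and $c$ as joins of join-irreducibles below them, all of which lie in $\varphi(a)$; taking the join recovers $a$. Given this lemma, injectivity is automatic: if $\varphi(a) = \varphi(b)$, then $a = \bigvee \varphi(a) = \bigvee \varphi(b) = b$.

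It remains to prove surjectivity. Given a lower set $D \subseteq J(L)$, I would show $D = \varphi\left(\bigvee D\right)$. One inclusion is clear, since every $p \in D$ satisfies $p \le \bigvee D$. Conversely, if $p \in J(L)$ and $p \le \bigvee D$, then iterating join-primality over the finite join $\bigvee D$ yields some $d \in D$ with $p \le d$; as $D$ is downward closed in $J(L)$, this forces $p \in D$. (The case $D = \emptyset$ is again covered by the empty-join convention.) Hence $\varphi$ is onto, completing the proof that it is an isomorphism. I expect the representation lemma (that every element is the join of the join-irreducibles below it) to be the main obstacle: it is the one step that fails for infinite lattices and that must carry the weight of the finiteness hypothesis.
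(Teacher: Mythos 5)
Your proposal is correct and follows essentially the same route as the paper: the same map $a \mapsto \{p \text{ join-prime} : p \le a\}$, the same induction showing every element is the join of the join-primes below it (which yields injectivity), and the same iterated use of join-primality for surjectivity. The only cosmetic difference is that you verify $\bigvee\varphi(b) \vee \bigvee\varphi(c) \ge a$ via the inclusion $\varphi(b)\cup\varphi(c)\subseteq\varphi(a)$ rather than noting the two sets are equal, which is equally valid.
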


\begin{proof}
We claim that the embedding 
\[ f: a \mapsto \{ p \le a \ :\ \mbox{$p$ is join-prime}\} \]
works. It certainly preserves meets, because meet corresponds to greatest lower bound. It preserves joins as well, by definition of join-prime.

To show \textbf{surjectivity}, let $P$ be a downward closed set of join-prime elements. Consider $a = \displaystyle\bigvee_{p \in P} p$. Certainly $P \subseteq f(a)$, so we still need to prove that $f(a) \subseteq P$. This certainly holds if $P = \emptyset$ and thus $a = 0$. Otherwise, suppose that $q \in f(a)$, so $q$ is join-prime and 
\[ q \le a = \displaystyle\bigvee_{p \in P} p.\]
Applying $|P| - 1$ times the join-primality of $q$, gives us $q \le p$ for some $p \in P$. But $P$ is downward closed, so also $q \in P$, as required.

To show \textbf{injectivity}, we claim that $a = \displaystyle\bigvee_{p \in f(a)} p$ for all $a \in L$. (If so, then $f(a) = f(b)$ immediately implies $a = b$.) We prove this by induction. The claim is certainly valid voor $a = 0$. So take a nonzero $a \in L$ and assume that we have already proven that $b = \displaystyle\bigvee_{p \in f(b)} p$ for the finitely many $b \in L$ with $b < a$. Certainly we have $p \le a$ for all $p \in f(a)$, so also $\displaystyle\bigvee_{p \in f(a)} p \le a$. If $a$ is join-irreducible, then it is join-prime as well by Lemma \ref{irr-prime} and we have $a \in f(a)$, so $\displaystyle\bigvee_{p \in f(a)} p = a$. Otherwise $a$ is not join-irreducible, say $a = b \vee c$ where $b,c \in L$ with $b < a$ and $c < a$. By the induction hypothesis, $b$ and $c$ are the join of all join-primes below them. Futhermore every join-prime below $a$ is also below one of $b$ and $c$, by definition of join-prime. So 
\[ a = b \vee c = \left( \bigvee_{p \in f(b)} p \right) \vee  \left( \bigvee_{p \in f(c)} p \right) = \bigvee_{p \in f(a)} p \]
as required.
\end{proof}

Note that distributivity is only used when proving injectivity.

For the lattice of divisors of $12$, the isomorphism is as follows:
\begin{center}
	\psset{unit=0.8cm}
\begin{pspicture}(0,0.7)(12,4.5)

{
\cnode(3,1){2.3pt}{n1}
\uput[dl](3,1){$1$}

\cnode(2,2){2.3pt}{n2}
\uput[dl](2,2){$2$}

\cnode(1,3){2.3pt}{n4}
\uput[dl](1,3){$4$}

\cnode(4,2){2.3pt}{n3}
\uput[ur](4,2){$3$}

\cnode(3,3){2.3pt}{n6}
\uput[ur](3,3){$6$}

\cnode(2,4){2.3pt}{n12}
\uput[ur](2,4){$12$}
}

\ncline{n1}{n2}
\ncline{n1}{n3}
\ncline{n2}{n6}
\ncline{n2}{n4}
\ncline{n3}{n6}
\ncline{n4}{n12}
\ncline{n6}{n12}

{
\cnode(10,1){2.3pt}{m1}
\uput[dl](10,1){$\emptyset$}

\cnode(9,2){2.3pt}{m2}
\uput{2pt}[dl](9,2){$\{2\}$}

\cnode(8,3){2.3pt}{m4}
\uput{2pt}[dl](8,3){$\{2, 4\}$}

\cnode(11,2){2.3pt}{m3}
\uput{2pt}[ur](11,2){$\{3\}$}

\cnode(10,3){2.3pt}{m6}
\uput{2pt}[ur](10,3){$\{2, 3\}$}

\cnode(9,4){2.3pt}{m12}
\uput{2pt}[ur](9,4){$\{2, 3, 4\}$}
}

\ncline{m1}{m2}
\ncline{m1}{m3}
\ncline{m2}{m6}
\ncline{m2}{m4}
\ncline{m3}{m6}
\ncline{m4}{m12}
\ncline{m6}{m12}

\pcline{->}(4.4,2.8)(6.3,2.8)
\aput{:U}{$\cong$}

\end{pspicture}

\end{center}

As mentioned before, Birkhoff's representation theorem can be generalized. Any infinite distributive lattice is isomorphic to a lattice of sets as well. However it is not possible to prove this using join-prime elements like in the finite case. And infinite distributive lattice might not contain any join-prime or even join-irreducible elements. For example the natural numbers, ordered inversely from large to small, form a distributive lattice where $\wedge$ is least common multiple and $\vee$ is greatest common divisor. However there are no join-irreducible elements, because every $n \in \mathbb{N}$ is the greatest common divisor of e.g. $2n$ and $3n$.

At the end of the article we will be able prove that every infinite distributive lattice is isomorphic to a lattice of sets, as a corollary of Birkhoff's representation theorem, using the compactness theorem and Stone's representation theorem.

\section{Boolean algebras}

\begin{definition}
A \textbf{Boolean algebra} (\emph{BA}) $(A, \wedge, \vee, 0, 1, \neg)$ is a distributive lattice $(A, \wedge, \vee)$ with least element $0$ and greatest element $1$ and with a unary operator $\neg$ (\emph{complementation}) that satisfies
\begin{align*}
	\neg x \wedge x &= 0 \\
	\neg x \vee x &= 1
\end{align*}
for all $x \in A$.
\end{definition}

The most common example of a BA is the \textbf{power set algebra} of a set $X$. The elements of this power set algebra are the subsets of $X$, $\wedge$ is intersection, $\vee$ is union, $0$ is the empty set, $1$ is the whole of $X$ and $\neg$ is complementation in $X$.

A subalgebra of a power set algebra (i.e.\ a collection of subsets of $X$ which contains $\emptyset$ and is closed under binary unions, binary intersections and complementation in $X$) is called an \textbf{algebra of sets}.

An \textbf{atom} in a partial order $(P, \le)$ with least element $0$ is a nonzero element $a \in P$ such that there is no $x \in P$ with $0 < x < a$. Every power set algebra has atoms, namely its singletons. 

We prove that every finite BA is isomorphic to a power set algebra, namely to the power set algebra of its set of atoms. The proof is inspired by \cite{goodstein}.

\begin{theorem}
A finite BA $(A, \wedge, \vee, 0, 1, \neg)$ is isomorphic to the power set algebra of the set of its atoms.
\end{theorem}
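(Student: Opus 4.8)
The plan is to mimic the proof of Birkhoff's representation theorem, using the map
\[ f : a \mapsto \{ p \le a \ :\ p \text{ is an atom} \} \]
sending each element to the set of atoms below it, and to show that this is a Boolean algebra isomorphism onto the full power set algebra of the atoms. Before anything else I would record that in a distributive lattice every atom is join-prime: an atom $p$ is trivially join-irreducible, since $p = b \vee c$ forces $b, c \le p$ and hence $b = p$ or $c = p$, and join-irreducible implies join-prime by Lemma~\ref{irr-prime}. This fact will be used repeatedly.

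The heart of the argument is the claim that every element is the join of the atoms below it, that is, $a = \bigvee f(a)$ for all $a \in A$. I expect this to be the main obstacle, and it is where finiteness and the complement operation both enter. I would argue by contradiction: writing $a' = \bigvee f(a)$, clearly $a' \le a$, so if $a' \neq a$ then $a \wedge \neg a' \neq 0$, as otherwise $a \le a'$. Since $A$ is finite, every nonzero element lies above some atom (take a minimal element of the nonempty finite set $\{ x : 0 < x \le a \wedge \neg a' \}$), so there is an atom $q \le a \wedge \neg a'$. Then $q \le a$ gives $q \le a'$ by definition of $a'$, while $q \le \neg a'$ gives $q \le a' \wedge \neg a' = 0$, contradicting that $q$ is nonzero. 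The case $a = 0$ is covered by the empty-join convention.

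Granting this claim, the rest is routine. Injectivity is immediate, since $f(a) = f(b)$ forces $a = \bigvee f(a) = \bigvee f(b) = b$. Preservation of meets follows because $p \le a \wedge b$ iff $p \le a$ and $p \le b$; preservation of joins follows because $p \le a \vee b$ iff $p \le a$ or $p \le b$, the forward direction being exactly the join-primeness of $p$. For complementation I would check that for each atom $p$ exactly one of $p \le a$ and $p \le \neg a$ holds: not both, since $a \wedge \neg a = 0$; and at least one, since $p \le 1 = a \vee \neg a$ and $p$ is join-prime. Hence $f(\neg a)$ is the set-theoretic complement of $f(a)$. Finally, for surjectivity, given any set $S$ of atoms I would take $a = \bigvee S$ and verify $f(a) = S$: one inclusion is clear, and for the other, any atom $q \le \bigvee S$ satisfies $q \le p$ for some $p \in S$ by join-primeness, whence $q = p \in S$ as both are atoms.
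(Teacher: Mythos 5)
Your proposal is correct and follows essentially the same route as the paper: the same map $f(a) = \{p \le a : p \text{ is an atom}\}$, the same key claim $a = \bigvee f(a)$ proved by finding an atom below $a \wedge \neg\bigvee f(a)$, and the same verifications of surjectivity and preservation of the operations. The only cosmetic difference is that you factor the repeated computation $p = (p \wedge a) \vee (p \wedge b)$ through the observation that atoms are join-prime (via Lemma~\ref{irr-prime}), where the paper carries it out directly each time.
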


\begin{proof}
We claim that the isomorphism
\[ f: a \mapsto \{ p \le a \ :\ \mbox{$p$ is an atom}\} \]
works. It is trivial to verify that $f$ preserves meets, $0$ and $1$. To check that $f$ preserves joins, note that if for an atom $p$ we have $p \le a \vee b$, then 
\[ p =  p \wedge (a \vee b) = (p \wedge a) \vee (p \wedge b). \]
As $p$ is an atom, both $p \wedge a$ and $p \wedge b$ are either $0$ or $p$, and at least one of both must by $p$ so $p \le a$ or $p \le b$. Similarly, we can check that $f$ preserves complementation by considering
\[ p = p \wedge (a \vee \neg a) = (p \wedge a) \vee (p \wedge \neg a). \]
This shows that an atom $p$ is below at least one of $a$ and $\neg a$. Moreover the only element below $a$ and $\neg a$ is $0$, which is not an atom.

To prove \textbf{surjectivity}, let $P$ be a set of atoms. If $P$ is empty, then $f(0) = P$. Otherwise, we claim that $a = \displaystyle\bigvee_{p \in P} p$ is an element of $A$ with $f(a) = P$. Every atom in $P$ is certainly below $a$, so it remains to prove that $f(a) \subseteq P$. Take an atom $q \in f(a)$, that is $q \le \displaystyle\bigvee_{p \in P} p$. So
\[ q = q \wedge \displaystyle\bigvee_{p \in P} p = \displaystyle\bigvee_{p \in P} (q \wedge p). \]
As $q$ is an atom, $q = q \wedge p$ for some $p \in P$. But this $p$ is an atom as well, so $p = q$, and $q \in P$ as required.

To prove \textbf{injectivity}, we claim that $a = \displaystyle\bigvee_{p \in f(a)} p$ for every $a \in A$. Then $f(a) = f(b)$ would immediately imply that $a = b$. Certainly any $p \in f(a)$ is below $a$, so we only need to prove that $a \le \displaystyle\bigvee_{p \in f(a)} p$. Suppose for contradiction that $a \not\le \displaystyle\bigvee_{p \in f(a)} p$. Then $\neg a \wedge \displaystyle\bigvee_{p \in f(a)} p$ is nonzero, so (because the BA is finite) we can certainly find an atom $q$ below it. But then, like before, we get $q \in f(a)$ so $q \le a$, which contradicts $q \le \neg a$. Hence $a = \displaystyle\bigvee_{p \in f(a)} p$, as required.
\end{proof}

\begin{corollary}
Every finite BA has cardinality $2^n$ for some $n \in \mathbb{N}$.
\end{corollary}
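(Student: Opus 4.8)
The plan is to obtain this immediately as a consequence of the preceding theorem. First I would let $n$ be the number of atoms of the finite Boolean algebra $A$. Since every atom is in particular an element of $A$, and $A$ is finite by hypothesis, the collection of atoms of $A$ is a finite set, so $n \in \mathbb{N}$ is well defined.

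Next I would invoke the theorem just proved: $A$ is isomorphic to the power set algebra of its set of atoms. Because an isomorphism of Boolean algebras is in particular a bijection between the underlying sets, $A$ has the same cardinality as the power set of an $n$-element set.

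It then remains only to recall the elementary fact that a set with $n$ elements has exactly $2^n$ subsets. This can be proved by a short induction on $n$, or by observing that a subset is determined by an independent two-way choice (``in'' or ``out'') for each of the $n$ elements. Combining this with the previous paragraph gives $|A| = 2^n$, as claimed.

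Since all the substantive work was already carried out in establishing the representation theorem, I do not expect any real obstacle here. The only points that deserve a word of care are that the set of atoms is genuinely finite — which is guaranteed precisely because $A$ itself is finite — and the routine count of the subsets of a finite set.
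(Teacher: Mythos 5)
Your proof is correct and takes exactly the same route as the paper: both deduce the corollary immediately from the representation theorem, since a finite BA is then in bijection with the power set of its (finite) set of atoms. You merely spell out the routine details (finiteness of the atom set and the count $2^n$) that the paper leaves implicit.
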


\begin{proof}
By the previous theorem, every finite BA has the cardinality of some power set.
\end{proof}

\begin{corollary}
In every finite $BA$, each nonzero element can be uniquely written as the join of some atoms.
\end{corollary}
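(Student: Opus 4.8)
The plan is to read off both existence and uniqueness directly from the isomorphism constructed in the previous theorem, so that almost no new work is required. Recall that
\[ f: a \mapsto \{ p \le a \ :\ \mbox{$p$ is an atom}\} \]
is an isomorphism from $A$ onto the power set algebra of its atoms, and that in the course of proving injectivity we already established the identity $a = \bigvee_{p \in f(a)} p$ for every $a \in A$.

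For \textbf{existence}, I would simply take a nonzero $a \in A$ and invoke this identity: $a$ is the join of the atoms lying in $f(a)$. It remains only to check that $f(a)$ is nonempty, which follows because $f(0) = \emptyset$ and $f$ is injective, so a nonzero $a$ cannot have $f(a) = \emptyset$. Thus $a$ is genuinely expressed as the join of a nonempty set of atoms.

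For \textbf{uniqueness}, suppose $a = \bigvee_{p \in S} p$ for some set $S$ of atoms. I would apply $f$ to both sides. Since $f$ preserves joins and since $f(p) = \{p\}$ for every atom $p$ (the only atom below an atom is itself), this gives
\[ f(a) = \bigvee_{p \in S} f(p) = \bigcup_{p \in S} \{p\} = S, \]
where the middle join is union because join in the power set algebra is union. Hence $S = f(a)$, which is completely determined by $a$, so the representation is unique.

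I do not expect any serious obstacle, as essentially all of the content was already proved in the previous theorem. The only point needing a moment's care is the observation that $f(p) = \{p\}$ for an atom $p$, which is immediate from the definition of an atom, together with the reminder that the convention on empty joins is exactly what lets the nonzero hypothesis rule out the degenerate case $S = \emptyset$.
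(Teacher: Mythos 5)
Your proof is correct and follows exactly the route the paper intends: the corollary is stated without proof as an immediate consequence of the representation theorem, and you extract both existence (from the identity $a = \bigvee_{p \in f(a)} p$ established there) and uniqueness (by applying the isomorphism $f$ and noting $f(p) = \{p\}$ for atoms) from that theorem. Nothing is missing; the observations that $f(a) \neq \emptyset$ for nonzero $a$ and that finiteness makes all the relevant joins finite are exactly the small points that needed checking.
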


\begin{corollary}
Given two BAs with equally many atoms, then any bijection between their sets of atoms induces a unique isomorphism between the two BAs.
\end{corollary}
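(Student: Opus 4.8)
The plan is to exploit the previous theorem, which identifies each finite BA with the power set algebra of its atoms, so that the problem collapses to the completely transparent situation of power set algebras. Let $A$ and $B$ be the two finite BAs, write $\mathrm{At}(A)$ and $\mathrm{At}(B)$ for their sets of atoms, and let $f_A$ and $f_B$ be the isomorphisms supplied by the theorem, each sending an element to the set of atoms below it (so $f_A(a) = \{ p \le a : p \text{ an atom}\}$). A bijection $g : \mathrm{At}(A) \to \mathrm{At}(B)$ induces a map $g_* : \mathcal{P}(\mathrm{At}(A)) \to \mathcal{P}(\mathrm{At}(B))$ on power sets by taking images, $g_*(S) = \{ g(p) : p \in S \}$.

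For existence, I would first check that $g_*$ is an isomorphism of power set algebras: since $g$ is a bijection, taking images commutes with union, intersection and complementation and sends $\emptyset$ to $\emptyset$ and the whole set to the whole set. This is routine. Then the composite $h = f_B^{-1} \circ g_* \circ f_A : A \to B$ is an isomorphism, being a composition of isomorphisms. That $h$ extends $g$ is immediate from the formulas: for an atom $p$ we have $f_A(p) = \{p\}$, hence $g_*(\{p\}) = \{g(p)\}$, hence $h(p) = f_B^{-1}(\{g(p)\}) = g(p)$.

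For uniqueness I would first observe that any isomorphism $h : A \to B$ automatically carries atoms to atoms, since being an atom is expressible purely through $\le$ and $0$, all of which $h$ and $h^{-1}$ preserve. Thus every isomorphism restricts to a bijection on atoms, and it is meaningful to ask which of them restrict to the given $g$. Suppose $h_1$ and $h_2$ both agree with $g$ on atoms. By the corollary that each nonzero element is the join of the atoms below it, every $a \in A$ satisfies $a = \bigvee_{p \in f_A(a)} p$ (the case $a = 0$ being covered by the empty-join convention). Since isomorphisms preserve joins, $h_i(a) = \bigvee_{p \in f_A(a)} g(p)$ for both $i$, so $h_1 = h_2$.

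The argument has no serious obstacle; its only content lies in assembling the earlier results. The single point deserving care is the claim that isomorphisms preserve atoms, which is exactly what makes uniqueness a well-posed statement and simultaneously shows that restriction to atoms is a well-defined bijection from the isomorphisms $A \to B$ onto the bijections $\mathrm{At}(A) \to \mathrm{At}(B)$.
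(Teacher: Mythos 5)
Your proof is correct and is exactly the argument the paper leaves implicit (the corollary is stated without proof): transport the given bijection of atoms through the two isomorphisms onto the power set algebras of atoms, and get uniqueness from the facts that isomorphisms preserve atoms and joins and that every element is the join of the atoms below it. The only caveat is that, as with the paper's surrounding results, the statement must be read as concerning \emph{finite} BAs, since both the representation theorem you invoke and the join-of-atoms argument require finiteness (for infinite BAs the claim is false).
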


\section{The countable atomless BA}

Consider the BA of subsets of $\mathbb{N}$. This is isomorphic to the BA of $0,1$-sequences (i.e.\ elements of $\{0,1\}^\mathbb{N}$ where $\wedge$ is pointwise minimum and $\vee$ is pointwise maximum) by the isomorphism sending a subset of $\mathbb{N}$ to its characteristic function.

If we have a finite string of $0$'s and $1$'s (e.g.\ $01001$) then we can repeat this string infinitely often to obtain an element of $\{0,1\}^\mathbb{N}$, which we write as
\begin{align*}
	\overline{01001} &= 01001\,01001\,01001\ldots \\ 
		&= \{n \in \mathbb{N} \ \ :\ \ n \equiv 1,4 \pmod 5 \}
\end{align*}
Like this we can obtain all periodic subsets of $\mathbb{N}$, i.e.\ all $0,1$-sequences $a$ such that there is a nonzero $k \in \mathbb{N}$ such that $a(n) = a(n+k)$ for all $n \in \mathbb{N}$. These periodic sets form a subalgebra of the BA of subsets of $\mathbb{N}$. This subalgebra is different in two important ways from every power set algebra: it is countably infinite (trivially) and it is atomless.

Recall that every power set algebra has atoms, namely its singletons. The BA of periodic subsets of $\mathbb{N}$ however has no atoms. Indeed from a nonzero element $\overline{x_0x_1\ldots x_{k-1}}$ we can always get \emph{closer to $0$} by adding zeroes until the length of the period is doubled:
\[ 0 < \overline{x_0x_1\ldots x_{k-1}\underbrace{00\ldots 0}_{\mbox{$k$ zeroes}}} < \overline{x_0x_1\ldots x_{k-1}} \]

We will call the BA of periodic subsets of $\mathbb{N}$ \textbf{the countable atomless Boolean algebra} (\emph{CABA}). Indeed there is only one countable atomless BA up to isomorphism. We will proof this using a back-and-forth argument. (For more on back-and-forth proofs, see \cite{hodges}.)

\begin{theorem}
Any two countable atomless Boolean algebras $(A, \wedge, \vee, 0, 1, \neg)$ and $(B, \wedge, \vee, 0, 1, \neg)$ are isomorphic.
\end{theorem}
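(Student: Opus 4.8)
The plan is to construct an isomorphism by a \textbf{back-and-forth} argument, building up a bijection in countably many stages. Since both $A$ and $B$ are countable, I would enumerate their elements as $A = \{a_0, a_1, a_2, \ldots\}$ and $B = \{b_0, b_1, b_2, \ldots\}$. At each stage I will have a finite partial map that is an isomorphism between the finite subalgebras generated so far, and I will alternately guarantee that the next element of $A$ enters the domain (the ``forth'' step) and that the next element of $B$ enters the range (the ``back'' step). The union of these partial maps will then be the desired isomorphism, provided each finite partial map really is an isomorphism of the subalgebras it generates and the maps are nested.

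The key observation that makes this work is that a finite subalgebra of a Boolean algebra is itself a finite Boolean algebra, hence (by the finite representation theorem proved earlier) determined up to isomorphism by its atoms, and any bijection of atom sets induces a unique isomorphism (the final corollary of the previous section). So at each stage the partial isomorphism can be described purely in terms of a correspondence between the atoms of the current finite subalgebra of $A$ and those of the current finite subalgebra of $B$. The ``forth'' step then reduces to the following: given a new element $a \in A$, I must refine the finite partition of $1_A$ into atoms so that $a$ becomes a union of atoms, and then match each newly split atom on the $A$ side with a correspondingly split atom on the $B$ side. Concretely, an atom $p$ of the old subalgebra splits into $p \wedge a$ and $p \wedge \neg a$; I need to choose a matching split of the corresponding atom $q$ in $B$.

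The heart of the argument — and the step I expect to be the main obstacle — is precisely this splitting step, and it is exactly here that \textbf{atomlessness} is used. When I split an atom $p$ of the current $A$-subalgebra into $p \wedge a$ and $p \wedge \neg a$, there are three cases: both pieces are nonzero, or one of them is zero. If both are nonzero, I must split the corresponding $B$-atom $q$ into two nonzero pieces $q', q''$ with $q' \vee q'' = q$ and $q' \wedge q'' = 0$; such a splitting exists precisely because $q$ is not an atom of $B$, which is guaranteed since $B$ is atomless. If instead one piece of $p$ is zero (say $p \le \neg a$), then $p$ is not actually split and $q$ stays matched to it unchanged. The symmetric ``back'' step, starting from a new element of $B$, is handled identically with the roles reversed. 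I would verify that each refinement keeps the atom correspondence a bijection and that the induced subalgebra isomorphisms are compatible with the earlier ones, so that their union is a well-defined isomorphism $A \to B$.
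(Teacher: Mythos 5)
Your proposal is correct and follows essentially the same route as the paper: a back-and-forth construction of nested finite subalgebras, where each partial isomorphism is determined by a bijection between atom sets, and atomlessness of the target is used exactly where you place it, namely to split the atom $q$ corresponding to $p$ whenever both $p \wedge a$ and $p \wedge \neg a$ are nonzero. The case analysis and the verification you sketch match the paper's argument step for step.
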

\begin{proof}
We construct successively bigger finite subalgebras
\[ \{0,1\} = A_0 \subset A_1 \subset A_2 \subset \ldots  \]
of $A$ and 
\[ \{0,1\} = B_0 \subset B_1 \subset B_2 \subset \ldots  \]
of $B$ such that
\[ \bigcup_{i \in \mathbb{N}} A_i = A \quad \mbox{and} \quad  \bigcup_{i \in \mathbb{N}} B_i = B, \]
and isomorphisms $f_0: A_0 \to B_0,\ f_1: A_1 \to B_1,\ f_2: A_2 \to B_2,\ \ldots$ such that
\[ f_0 \subset f_1 \subset f_2 \subset \ldots. \]
Then $f = \bigcup_{i \in \mathbb{N}} f_i$ will be the required isomorphism $A \cong B$.

List the elements of $A$ and $B$ as
\begin{align*}
	A &= \{a_0, a_1, a_2, a_3, \ldots \} \\
	B &= \{b_0, b_1, b_2, b_3, \ldots \}
\end{align*}

It is trivial to construct $f_0$.

So suppose that $n$ is even and we have constructed $A_n$, $B_n$ and $f_n$. As $A_n$ is generated by its atoms $p_0, p_1, \ldots, p_k$. By the isomorphism $f_n$, $B_n$ has atoms $f_n(p_0), f_n(p_1), \ldots, f_n(p_k)$. Let $x$ be the first element in $a_0, a_1, a_2, a_3, \ldots$ that is not in $A_n$. Let $A_{n+1}$ be the subalgebra of $A$ generated by $A_n$ and $x$. By using the disjunctive normal form, every element of $A_{n+1}$ can be written as the join of the elements in a subset of
\[ X = \{ p_0 \wedge x, p_0 \wedge \neg x, p_1 \wedge x, p_1 \wedge \neg x, \ldots, p_k \wedge x, p_k \wedge \neg x \}. \]
Hence $A_{n+1}$ is finite, and its atoms are the nonzero elements of $X$.

Now, for $i=0,\ldots,k$, define $x_i = p_i \wedge x$. Next, pick an element $y_i \in B$ such that
\[\begin{array}{ll}
	y_i = 0 & \mbox{if $x_i = 0$} \\
	y_i = f_n(p_i) & \mbox{if $x_i = p_i$} \\ 
	0 < y_i < f_n(p_i) & \mbox{if $0 < x_i < p_i$}
\end{array}\]
which is always possible because $B$ is atomless. Define $y = \bigvee_{i = 0}^k y_i$ and let $B_{n+1}$ be the subalgebra of $B$ generated by $B_n$ and $y$. Like before, every element of $B_{n+1}$ can be written as the join of the elements in a subset of
\[ Y = \{ f_n(p_0) \wedge y, f_n(p_0) \wedge \neg y, \ldots, f_n(p_k) \wedge f_n(x), f_n(p_k) \wedge \neg f_n(x) \}. \]
Hence $B_{n+1}$ is finite and its atoms are the nonzero elements of $Y$. Hence we can define $f_{n+1}$ to extend $f_n$ by mapping $x$ to $y$. Indeed, this induces a bijection of the atoms of $A_{n+1}$ to the atoms of $B_{n+1}$, so it gives us a well-defined isomorphism.

If $n$ is odd and then we construct $A_{n+1}$, $B_{n+1}$ and $f_{n+1}$ similarly, but with $A$ and $B$ switched around. (E.g., we let $x$ be the first element in $b_0, b_1, b_2, b_3, \ldots$ that is not in $B_n$, etc.) This makes sure that
\[ \bigcup_{i \in \mathbb{N}} B_i = B \quad \mbox{as well as} \quad  \bigcup_{i \in \mathbb{N}} A_i = A. \]
\end{proof}

\section{Embeddings into the countable atomless BA}

\begin{theorem}
Every finite Boolean algebra can be embedded into the countable atomless Boolean algebra.
\end{theorem}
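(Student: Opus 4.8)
The plan is to embed a finite Boolean algebra $A$ into the CABA by using the structural results already established. By the corollary following Birkhoff's representation theorem (or rather the theorem that every finite BA is isomorphic to the power set algebra of its atoms), we know that $A \cong \mathcal{P}(\{p_0, p_1, \ldots, p_{k}\})$, where $p_0, \ldots, p_k$ are the atoms of $A$. So it suffices to embed the power set algebra on a finite set of size $k+1$ into the CABA. The key observation is that an embedding of Boolean algebras is completely determined by where it sends the atoms, and by the last corollary of the previous section, it is enough to send the atoms to a collection of $k+1$ pairwise disjoint nonzero elements of the CABA whose join is $1$.

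First I would exhibit such a collection explicitly inside the CABA of periodic subsets of $\mathbb{N}$. For a Boolean algebra with $k+1$ atoms, I want $k+1$ disjoint nonzero periodic sets covering all of $\mathbb{N}$. The natural choice is the residue classes modulo $k+1$: for each $i \in \{0, 1, \ldots, k\}$, let
\[ q_i = \{ n \in \mathbb{N} \ :\ n \equiv i \pmod{k+1} \}, \]
which in the notation of the excerpt is the periodic set $\overline{x_0 x_1 \ldots x_k}$ with a single $1$ in position $i$. Each $q_i$ is a nonzero element of the CABA, the $q_i$ are pairwise disjoint (their meets are $0$), and their join is all of $\mathbb{N}$, which is the top element $1$.

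Next I would define the embedding $f \colon A \to \mathrm{CABA}$ by sending each atom $p_i$ to $q_i$ and extending by joins: every nonzero element $a \in A$ is uniquely the join of the atoms below it (by the corollary on unique decomposition), so set $f(a) = \bigvee_{p_i \le a} q_i$, and $f(0) = 0$. That this is a well-defined Boolean algebra homomorphism preserving $\wedge$, $\vee$, $0$, $1$ and $\neg$ follows from the last corollary of the previous section: any bijection between the atoms of a finite BA and a full set of $k+1$ disjoint nonzero elements summing to $1$ induces a homomorphism. Injectivity is immediate, since distinct elements of $A$ have distinct sets of atoms below them, and the corresponding $q_i$ are disjoint, so their joins differ.

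I do not expect any serious obstacle here; the main work was already done in establishing that finite BAs are power set algebras and that the CABA is atomless and countable. The only point requiring a little care is confirming that each $q_i$ genuinely lies in the subalgebra of periodic sets (it does, being periodic with period $k+1$) and that the induced map respects complementation, which holds because the $q_i$ partition $1$, so that $f(\neg a) = \bigvee_{p_i \not\le a} q_i = \neg f(a)$. Thus the embedding exists, completing the proof.
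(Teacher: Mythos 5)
Your proposal is correct and is essentially the paper's own proof: the map you construct, sending each atom $p_i$ to the residue class $q_i = \{n : n \equiv i \pmod{k+1}\}$ and extending by joins, is exactly the paper's embedding $X \mapsto \overline{\chi_X(0)\chi_X(1)\ldots\chi_X(k)}$, just described atomwise rather than via characteristic functions. You supply somewhat more detail in verifying the homomorphism properties (which the paper dismisses as obvious), but the underlying construction is identical.
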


\begin{proof}
Because every finite Boolean algebra is isomorphic to a power set algebra, it is sufficient to prove this for the power set algebra of subsets of $\{0,1,\ldots,k\}$. For $X \subseteq \{0,1,\ldots,k\}$, let $\chi_X$ be its characteristic function. Consider the map
\begin{align*}
\mathcal(P) &\to CABA \\
X &\mapsto \overline{\chi_X(0)\chi_X(1)\ldots \chi_X(k)}.
\end{align*}
This is obviously an injective homomorphism of Boolean algebras, as required.
\end{proof}

We can now find a lattice embedding from any finite distributive lattice into a finite Boolean algebra, and we have a Boolean algebra embedding of the latter into the countable atomless Boolean algebra. Hence:

\begin{corollary}
Every finite distributive lattice can be lattice embedded into the countable atomless Boolean algebra.
\end{corollary}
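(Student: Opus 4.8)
The plan is to chain together two results that the paper has already established. The corollary follows immediately once we combine Birkhoff's representation theorem with the preceding theorem on embedding finite Boolean algebras into the $CABA$, provided we can interpose a finite Boolean algebra between the given distributive lattice and the $CABA$. So the real content is a single lemma: every finite distributive lattice lattice-embeds into some finite Boolean algebra.

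To prove that interposing lemma, I would start from Birkhoff's representation theorem, which tells us that a finite distributive lattice $L$ is isomorphic to the lattice of lower sets of its join-prime elements. Let $J$ denote the set of join-prime elements of $L$, and let $n = |J|$. The key observation is that this lattice of lower sets sits naturally inside the power set algebra $\mathcal{P}(J)$: every downward-closed subset of $J$ is in particular a subset of $J$, so the Birkhoff isomorphism $a \mapsto \{p \in J : p \le a\}$ can be read as a map $L \to \mathcal{P}(J)$. I would then check that this map is a \emph{lattice} embedding into $\mathcal{P}(J)$, not merely into the sublattice of lower sets. Injectivity is inherited from Birkhoff. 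For the meet and join, the map sends $a \wedge b$ to the set of join-primes below $a \wedge b$, which equals $\{p \le a\} \cap \{p \le b\}$ since meet is greatest lower bound, matching intersection in $\mathcal{P}(J)$; and it sends $a \vee b$ to $\{p \le a \vee b\}$, which by the definition of join-prime equals $\{p \le a\} \cup \{p \le b\}$, matching union. Thus $L$ embeds into $\mathcal{P}(J)$ as a lattice, and $\mathcal{P}(J)$ is a finite Boolean algebra.

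With the interposing lemma in hand, the corollary assembles quickly. Given a finite distributive lattice $L$, the lemma produces a lattice embedding $L \hookrightarrow \mathcal{P}(J)$ into the finite Boolean algebra $\mathcal{P}(J)$. The preceding theorem gives a Boolean-algebra embedding $\mathcal{P}(J) \hookrightarrow CABA$; since a Boolean-algebra embedding in particular preserves $\wedge$ and $\vee$, it is a fortiori a lattice embedding. Composing the two embeddings yields a lattice embedding $L \hookrightarrow CABA$, which is exactly what the corollary asserts.

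I expect the only subtle point to be the distinction between a lattice homomorphism and a Boolean-algebra homomorphism at the seam between the two steps. The Birkhoff embedding lands in a power set algebra but need not preserve complementation or the top element, so $L$ embeds only as a lattice, not as a Boolean subalgebra; this is fine because the corollary asks only for a lattice embedding. Conversely, the map in the previous theorem is a full Boolean-algebra embedding, so it automatically preserves $\wedge$ and $\vee$ and hence serves as a lattice embedding in the composition. Keeping straight which structure each arrow preserves is the main thing to get right; the verifications themselves are the routine checks sketched above.
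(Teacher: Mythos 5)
Your proposal is correct and follows exactly the route the paper takes: Birkhoff's representation theorem places the finite distributive lattice inside the power set algebra of its join-primes (a finite Boolean algebra), and the preceding theorem embeds that into the $CABA$. The paper states this composition in one sentence without spelling out the verifications, which you supply correctly.
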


We can extend this result to countable distributive lattices, using the compactness theorem.

\begin{theorem}
\label{mainthm}
Every countable distributive lattice $(L, \wedge, \vee)$ can be lattice embedded into the countable atomless Boolean algebra.
\end{theorem}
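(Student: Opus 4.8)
The plan is to use the compactness theorem of first-order logic to reduce the countable case to the finite case, which we have already handled in the preceding corollary. The key idea is that ``being lattice-embeddable into the CABA'' can be captured by a first-order theory, and that a structure is embeddable precisely when all of its finite substructures are. Concretely, I would work in the first-order language of Boolean algebras augmented by a constant symbol $c_\ell$ for each element $\ell \in L$. Let $T$ be the theory consisting of: (i) the axioms asserting that the universe is a countable atomless Boolean algebra; and (ii) for each pair $\ell, m \in L$, the atomic diagram sentences $c_{\ell \wedge m} = c_\ell \wedge c_m$ and $c_{\ell \vee m} = c_\ell \vee c_m$, together with $c_\ell \neq c_m$ whenever $\ell \neq m$ in $L$. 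A model of $T$ is exactly a countable atomless Boolean algebra together with an interpretation of the constants that realises $L$ as a sublattice, i.e.\ a lattice embedding of $L$ into the CABA.

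\begin{proof}[Proof sketch]
By the uniqueness theorem for the countable atomless Boolean algebra proved above, any countable model of the theory $T$ described is isomorphic to the CABA, and the constants $c_\ell$ furnish the desired lattice embedding. So it suffices to show that $T$ has a countable model. By the L\"owenheim--Skolem theorem it is enough to show $T$ is consistent, and by the compactness theorem it is enough to show that every finite subset $T_0 \subseteq T$ is consistent. Such a $T_0$ mentions only finitely many of the constants, say $c_{\ell_1}, \ldots, c_{\ell_n}$. Let $L'$ be the finite sublattice of $L$ generated by $\ell_1, \ldots, \ell_n$; it is again distributive, being a sublattice of a distributive lattice. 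By the preceding corollary, $L'$ embeds as a lattice into the CABA, say via $g$. Interpreting each mentioned constant $c_{\ell_i}$ as $g(\ell_i)$ (and the remaining constants arbitrarily) yields a model of $T_0$: the CABA axioms hold, the finitely many meet/join equations in $T_0$ hold because $g$ is a lattice homomorphism, and the finitely many inequations hold because $g$ is injective. Hence $T_0$ is consistent, so by compactness $T$ is consistent and has a countable model, which gives the required embedding.
\end{proof}

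I expect the main obstacle to be purely bookkeeping rather than conceptual: one must set up the language and the theory $T$ carefully so that its models correspond \emph{exactly} to lattice embeddings of $L$ into a countable atomless Boolean algebra, and in particular so that every relevant finite fragment is witnessed by a genuine finite sublattice of $L$. The one subtle point worth checking is that the sublattice $L'$ generated by finitely many elements is genuinely finite and distributive; finiteness is not automatic for arbitrary lattices, but for distributive lattices the free distributive lattice on $n$ generators is finite, so $L'$ is finite as a homomorphic image of it. With that observation in hand, the finite case supplies the needed models and compactness does the rest.
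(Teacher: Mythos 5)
Your proposal is correct and follows essentially the same compactness argument as the paper: the same expanded language with a constant for each element of $L$, the same theory combining the atomless Boolean algebra axioms with the atomic diagram of $L$, finite satisfiability via the finite distributive case, and a countable model via L\"owenheim--Skolem. You even justify a point the paper leaves implicit (that the sublattice generated by finitely many elements of a distributive lattice is finite, via the free distributive lattice); the only slip is that countability cannot literally appear among the first-order axioms in (i), but your separate appeal to L\"owenheim--Skolem handles this correctly anyway.
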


\begin{proof}
Expand the first order language of Boolean algebra to include a constant symbol $\underline{a}$ for every element $a \in L$. Then consider the theory $T$ that consists of:
\begin{itemize}
  \setlength{\itemsep}{0pt}
\item the axioms of an atomless Boolean algebra,
\item $\underline{a} \not= \underline{b}$ for each distinct $a,b \in L$,
\item $\underline{a} \wedge \underline{b} = \underline{a \wedge b} $ for all $a,b \in L$,
\item $\underline{a} \vee \underline{b} = \underline{a \vee b} $ for all $a,b \in L$.
\end{itemize}
A model for this theory is then an atomless Boolean algebra with a lattice embedding of $L$ into it, given by the asignment of the constants $\underline{a}$ for $a \in L$. Every finite subset $T'$ of $T$ has a model. Indeed $T'$ involves only finitely many constants $\underline{a}$ and we only need to embed the lattice generated by the corresponding elements of $L$ into an atomless Boolean algebra. But this lattice is finite and distributive, so it is possible by the preceding corollary.

By the compactness theorem, there is a model for $T$. Indeed, because $T$ is countable, we can take this model to be countable as well. So we have a countable atomless Boolean algebra with an embedding of $L$ into it, as required.
\end{proof}

\begin{corollary}
Every countable distributive lattice can be embedded into the BA of computable subsets of $\mathbb{N}$.
\end{corollary}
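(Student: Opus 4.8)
The plan is to exploit the concrete description of the countable atomless Boolean algebra that we already have in hand. Recall that we defined the CABA to be the Boolean algebra of \emph{periodic} subsets of $\mathbb{N}$. The first thing I would observe is that every periodic subset of $\mathbb{N}$ is computable: such a set is completely determined by a finite block $x_0 x_1 \ldots x_{k-1}$, and to decide whether a given $n \in \mathbb{N}$ lies in the set, one simply computes $n \bmod k$ and reads off the corresponding bit. Membership is thus decidable, so each periodic set is computable.

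Next I would check that the computable subsets of $\mathbb{N}$ themselves form a Boolean algebra under the usual set operations. This is routine: $\emptyset$ and $\mathbb{N}$ are computable, and if $S$ and $T$ are computable then so are $S \cap T$, $S \cup T$ and $\mathbb{N} \setminus S$, since one can decide membership in each by running the deciders for $S$ and $T$ and combining their outputs. Hence the computable sets are closed under $\wedge$, $\vee$ and $\neg$ and contain $0$ and $1$, so they form a subalgebra of the power set algebra of $\mathbb{N}$.

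Putting these two observations together, the CABA is a subalgebra of the Boolean algebra of computable subsets of $\mathbb{N}$: its underlying set (the periodic sets) is a subset of the computable sets, and it is already closed under all the Boolean operations inherited from the power set algebra. The inclusion map of the CABA into the computable sets is therefore injective and, in particular, a lattice homomorphism.

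Finally, I would simply compose. By Theorem \ref{mainthm}, any countable distributive lattice $L$ admits a lattice embedding into the CABA. Composing this embedding with the inclusion of the CABA into the Boolean algebra of computable subsets of $\mathbb{N}$ yields a lattice embedding of $L$ into the computable sets, as required. I do not expect any genuine obstacle here; the only point deserving care is the opening observation that periodicity implies computability, and that is immediate from the finite-period description of the elements of the CABA.
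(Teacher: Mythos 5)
Your proposal is correct and follows exactly the paper's own route: the paper likewise notes that the CABA of periodic subsets is a subalgebra of the BA of computable sets and composes with the embedding from Theorem \ref{mainthm}. You merely spell out the (easy) verifications that the paper leaves implicit.
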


\begin{proof}
By the proposition above, every countable distributive lattice can be embedded into the countable atomless BA of periodic subsets of $\mathbb{N}$, which is a subalgebra of the BA of computable sets.
\end{proof}

\section{Appendix: Stone's Representation Theorem}

Stone's representation theorem, proven by Marshall Harvey Stone in 1936, says that every BA is isomorphic to an algebra of sets. In order to prove this, we need to introduce the notions of filters and ultrafilters in a BA. This is a straightforward generalisation of the more commonly known concepts of filters and ultrafilters on a set. Indeed an (ultra)filter on a set $X$ will be exactly an (ultra)filter in the power set algebra of $X$, and all relevant results about filters on sets will still be valid for BAs.

\begin{definition}
\begin{itemize}
\item A \textbf{filter} $F$ in a BA $(A, \wedge, \vee, 0, 1, \neg)$ is a proper subset $F \subset A$ which is upwards closed and closed under binary meets. That is:
\[ a \wedge b \in F \]
for all $a,b \in F$ and
\[ b \in F \]
whenever $a \in F$ and $a \le b$.
\item If we order filters by set inclusion, then a filter which is maximal for this order is called an \textbf{ultrafilter}. $Ultra(A)$ is the set of all ultrafilters of the BA $(A, \wedge, \vee, 0, 1, \neg)$.
\end{itemize}
\end{definition}

Any subset $B$ of a BA $(A, \wedge, \vee, 0, 1, \neg)$ such that finite meets of elements in $B$ are nonzero, \emph{generates} a filter, namely the filter of all elements of $A$ which are greater or equal than some finite meet of elements in $B$. This is the smallest filter containing all elements in $B$.

A straighforward application or Zorn's lemma proves that every filter can be extended to an ultrafilter.

Recall that an \emph{atom} in a partial order $(P, \le)$ with least element $0$ is an element $a \in P$ such that there is no $x \in P$ with $0 < x < a$. Every power set algebra has atoms, namely its singletons. If $a$ is an atom in the BA $(A, \wedge, \vee, 0, 1, \neg)$, then the filter generated by $\{a\}$ is an ultrafilter. In a finite BA, all the ultrafilters are generated by an atom. However, in an infinite BA, there are many other ultrafilters and they are very hard to visualize. Luckily we can still proof some useful lemmas about them.

\begin{lemma}
A filter $F$ in a BA $(A, \wedge, \vee, 0, 1, \neg)$ is an ultrafilter is and only if for each $a \in A$, exactly one of $a$ and $\neg a$ is in $F$.
\end{lemma}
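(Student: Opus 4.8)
The plan is to prove the biconditional characterising ultrafilters by proving each direction separately, with the backward direction being essentially immediate and the forward direction requiring the maximality of the ultrafilter together with the filter-generation construction described just before the lemma.

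For the \textbf{backward direction}, I would assume that for each $a \in A$ exactly one of $a$ and $\neg a$ lies in $F$, and show $F$ is a \emph{maximal} filter. First, $F$ is proper: since exactly one of $0$ and $\neg 0 = 1$ is in $F$, and any filter containing $0$ would be improper, we get $0 \notin F$, so $F \neq A$. Now suppose $G \supsetneq F$ is a filter and pick $a \in G \setminus F$. Since $a \notin F$, by hypothesis $\neg a \in F \subseteq G$. But then $a \wedge \neg a = 0 \in G$, forcing $G = A$, contradicting that $G$ is a proper subset. Hence no filter properly extends $F$, so $F$ is an ultrafilter.

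For the \textbf{forward direction}, I would assume $F$ is an ultrafilter and fix $a \in A$. First I note that $a$ and $\neg a$ cannot both be in $F$, since then $a \wedge \neg a = 0 \in F$, contradicting that $F$ is proper. It remains to show that \emph{at least} one of them lies in $F$. Suppose $a \notin F$; I want to conclude $\neg a \in F$. The key idea is to consider the set $F \cup \{\neg a\}$ and argue that it generates a filter properly containing $F$, which by maximality must equal $F$, so that $\neg a \in F$. To invoke the generation construction from the paragraph before the lemma, I must check that all finite meets of elements of $F \cup \{\neg a\}$ are nonzero; since $F$ itself is closed under meets, this reduces to showing that $b \wedge \neg a \neq 0$ for every $b \in F$. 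Here is where $a \notin F$ enters: if $b \wedge \neg a = 0$ for some $b \in F$, then $b \le a$ (using $b = b \wedge (a \vee \neg a) = (b \wedge a) \vee (b \wedge \neg a) = b \wedge a$), and since $F$ is upward closed this would give $a \in F$, a contradiction.

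The \textbf{main obstacle} is precisely this last step: converting the failure $a \notin F$ into the positive statement that $F \cup \{\neg a\}$ has the nonzero-finite-meet property, which is what licenses forming a strictly larger filter. The Boolean-algebra identity $b = (b \wedge a) \vee (b \wedge \neg a)$ (an instance of distributivity together with $a \vee \neg a = 1$) is the computational heart of the argument, and getting the direction of the order relation $b \le a$ correct is the one place where care is needed. Everything else follows formally from the definitions of filter, ultrafilter, and the generation lemma already established.
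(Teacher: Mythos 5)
Your proposal is correct and follows essentially the same route as the paper: the backward direction via maximality (no filter can contain both $a$ and $\neg a$), and the forward direction by showing that adjoining the missing complement to $F$ preserves the nonzero-finite-meet property (the paper phrases this contrapositively and adjoins $a$ rather than $\neg a$, but the computation $b = (b \wedge a) \vee (b \wedge \neg a)$ is the same). One small wording slip: the generated filter cannot both \emph{properly} contain $F$ and equal $F$ by maximality — you mean that it \emph{would} properly contain $F$ if $\neg a \notin F$, contradicting maximality — but the intended argument is clearly sound.
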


\begin{proof}
A filter $F$ with exactly one of $a$ and $\neg a$ is in $F$ for every $a \in A$ is certainly maximal and thus an ultrafilter, as no filter can contain both $a$ and $\neg a$.

Conversely, suppose that neither of $a$ and $\neg a$ is in $F$. Then $a$ has nonzero meet with every $f \in F$, because otherwise we would have $f \le \neg a$ so $\neg a \in F$. Hence $F \cup \{a\}$ has nonzero finite meets and generates a filter which strictly contains $F$. So $F$ is not an ultrafilter.
\end{proof}

We will not need the full strength of the following lemma, but prove it anyway for the reader's reference.

\begin{lemma}
Suppose we have an ultrafilter $U$ in a BA $(A, \wedge, \vee, 0, 1, \neg)$ and we have elements $a_1, \ldots, a_n \in A$ and $b \in U$. If $b \leq a_1 \vee \ldots \vee a_n$, then at least one of $a_1, \ldots, a_n$ is also in $U$.
\end{lemma}

\begin{proof}
Suppose for contradiction that none of $a_1, \ldots, a_n$ are in $U$. Then by the previous lemma all of $\neg a_1, \ldots, \neg a_n$ are in $U$, as well as
\[ \neg a_1 \wedge \ldots \wedge \neg a_n \in U. \]
However 
\[ \neg \left( \neg a_1 \wedge \ldots \wedge \neg a_n \right) = a_1 \vee \ldots \vee a_n \]
is greater than $b \in U$ and hence must be in $U$ as well, a contradiction.
\end{proof}

We are now ready to prove \textbf{Stone's representation theorem}.

\begin{theorem} \emph{(Stone 1936, \cite{stonerepr})}
Every BA $(A, \wedge, \vee, 0, 1, \neg)$ is isomorphic to an algebra of sets. Indeed there is a algebra-embedding $s$ from $A$ into the power set algebra of $Ultra(A)$ given by
\[ s(a) = \{U \in Ultra(A)\ \ :\ \ a \in U\}. \]
\end{theorem}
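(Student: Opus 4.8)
The plan is to verify that the map $s$ is an injective homomorphism of Boolean algebras; since the homomorphic image of $A$ is automatically a subalgebra of $\mathcal{P}(Ultra(A))$ containing $\emptyset = s(0)$, this is exactly what it means for $A$ to be isomorphic to an algebra of sets, namely to $s(A)$. I would split the work into two parts: first checking that $s$ preserves the operations $\wedge$, $\vee$, $\neg$ and the constants $0$, $1$, and then checking injectivity.

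For the homomorphism part, most verifications follow quickly from the definition of filter together with the two lemmas already established. Since no ultrafilter contains $0$ (a filter is proper and upwards closed, so $0 \in F$ would force $F = A$) while every ultrafilter contains $1$, we immediately get $s(0) = \emptyset$ and $s(1) = Ultra(A)$. To see that $s$ preserves meets, i.e.\ $s(a \wedge b) = s(a) \cap s(b)$, I would argue both inclusions: if $a \wedge b \in U$ then $a, b \in U$ by upward closure, since $a \wedge b \le a$ and $a \wedge b \le b$; and conversely if $a, b \in U$ then $a \wedge b \in U$ because $U$ is closed under meets. For complementation, the identity $s(\neg a) = Ultra(A) \setminus s(a)$ is precisely the first lemma above, which says each ultrafilter contains exactly one of $a$ and $\neg a$. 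Preservation of joins then comes for free via De Morgan: writing $a \vee b = \neg(\neg a \wedge \neg b)$, the map $s$ automatically preserves $\vee$ once it preserves $\wedge$ and $\neg$.

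Injectivity is the step where the real content lies. Because $s$ is a homomorphism into a power set algebra, it suffices to show that $s(a) = \emptyset$ forces $a = 0$; applying this to the symmetric difference $(a \wedge \neg b) \vee (\neg a \wedge b)$, whose image under $s$ is the symmetric difference of $s(a)$ and $s(b)$, then yields that $s(a) = s(b)$ implies $a = b$. So I would take an arbitrary nonzero $a \in A$ and produce an ultrafilter containing it: the singleton $\{a\}$ has nonzero finite meets (the only such meet being $a$ itself), hence generates a filter, which extends to an ultrafilter $U$ by the Zorn's lemma argument recalled above. Then $a \in U$, so $U \in s(a)$ and $s(a) \ne \emptyset$.

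The main obstacle is exactly this guarantee of \emph{enough} ultrafilters: the whole theorem hinges on every nonzero element sitting inside some ultrafilter, and this is where the nonconstructive filter-extension lemma (Zorn's lemma) is indispensable. Everything else reduces to bookkeeping with the filter axioms and the two lemmas already proved.
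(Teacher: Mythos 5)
Your proof is correct and follows essentially the same route as the paper: verify the homomorphism identities using the definition of filter and the ultrafilter lemmas, then obtain injectivity by extending the filter generated by a suitable nonzero element to an ultrafilter via Zorn's lemma. The only (harmless) deviations are that you derive preservation of joins by De Morgan from preservation of meets and complements, where the paper invokes its second lemma on ultrafilters and joins directly, and that you reduce injectivity to showing $s(a) = \emptyset$ forces $a = 0$ via the symmetric difference, where the paper argues directly with the filter generated by $\{a, \neg b\}$ when $a \not\le b$ --- both variants hinge on the same ultrafilter-extension step.
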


\begin{proof}
We have to verify that $s$ is indeed an embedding of BAs. We have
\[ s(0) = \emptyset \]
as no filter can contain $0$. Similarly
\[ s(1) = Ultra(A) \]
since every filter contains $1$. For $a,b \in A$, 
\[ s(a \wedge b) = s(a) \wedge s(b) \]
as any filter containing $a \wedge b$ also contains $a$ and $b$ and conversely, by definition of filter. And 
\[ s(a \vee b) = s(a) \vee s(b) \]
as any ultrafilter containing $a \vee b$ contains either $a$ or $b$ by the lemma above, and any filter containing either $a$ or $b$ contains $a \vee b$ by definition of filter. Finally
\[ s(\neg a) = \neg s(a) \]
as an ultrafilter contains exacly one of $a$ and $\neg a$, so the ultrafilters containing $\neg a$ are exactly those not containing $a$.

We have now proven that $s$ is a homomorphism, but still need to prove injectivity. To do this, it suffices to find, for any distinct $a,b \in A$, an ultrafilter containing $a$ but not $b$ or the other way around. Suppose without loss of generality that $a \not\le b$. Then $a$ and $\neg b$ have nonzero intersection, so ${a, \neg b}$ generates a filter. Any ultrafilter extending this filter, will contain $a$ but not $b$.
\end{proof}

\begin{theorem}
Every distributive lattice is isomorphic to a lattice of sets.
\end{theorem}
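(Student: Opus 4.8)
The plan is to combine the two representation theorems that have already been established, namely Birkhoff's representation theorem for the finite case and Stone's representation theorem for Boolean algebras, with the compactness theorem, exactly mirroring the strategy used in the proof of Theorem~\ref{mainthm}. The key observation is that a lattice of sets is just a sublattice of a power set algebra, and a power set algebra is in particular a Boolean algebra. So to embed an arbitrary distributive lattice $L$ into a lattice of sets, it suffices to lattice-embed $L$ into \emph{some} Boolean algebra: composing with Stone's embedding $s$ then lands us inside an algebra of sets, and the image of $L$ under the composite is a collection of sets closed under binary unions and intersections (since the embedding preserves $\wedge$ and $\vee$), i.e.\ a lattice of sets.

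First I would reduce to the case of a Boolean-algebra target. Concretely, I would show that every distributive lattice $L$ admits a lattice embedding into some Boolean algebra $A$. For this I would set up a first-order theory $T$ in the language of Boolean algebras expanded by a constant $\underline{a}$ for each $a \in L$, with axioms asserting that the model is a Boolean algebra, that distinct constants are distinct, and that the constants respect meet and join (i.e.\ $\underline{a} \wedge \underline{b} = \underline{a \wedge b}$ and $\underline{a} \vee \underline{b} = \underline{a \vee b}$). This is the same theory as in Theorem~\ref{mainthm}, except that we drop the atomlessness axioms, which are not needed here. A model of $T$ is precisely a Boolean algebra into which $L$ lattice-embeds via the interpretation of the constants.

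Next I would verify finite satisfiability. Any finite subset $T'$ of $T$ mentions only finitely many constants, hence only finitely many elements of $L$; these generate a finite distributive sublattice $L'$ of $L$. By Birkhoff's representation theorem, $L'$ is isomorphic to a lattice of lower sets of its join-primes, which is a sublattice of the power set algebra of those join-primes, and hence embeds into that finite Boolean algebra. This gives a model of $T'$. By the compactness theorem, $T$ itself has a model, yielding a Boolean algebra $A$ with a lattice embedding $L \hookrightarrow A$.

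Finally I would apply Stone's representation theorem to $A$, obtaining an embedding $s : A \to \mathcal{P}(\mathrm{Ultra}(A))$ that preserves $\wedge$ and $\vee$. The composite $L \hookrightarrow A \xrightarrow{s} \mathcal{P}(\mathrm{Ultra}(A))$ is then an injective lattice homomorphism whose image is a lattice of sets, completing the proof. The step I expect to require the most care is the reduction itself, namely making sure that the compactness argument genuinely produces a Boolean algebra containing $L$ as a \emph{sublattice} (as opposed to merely a homomorphic image): this rests on the ``distinct constants are distinct'' axioms guaranteeing injectivity, and on Birkhoff's theorem supplying the finite models — so the one subtlety worth double-checking is that the finite sublattice generated inside $L$ is still distributive, which it is, since distributivity is inherited by sublattices.
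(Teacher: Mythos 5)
Your proposal is correct and follows essentially the same route as the paper: Birkhoff for the finite case, a compactness argument (as in Theorem~\ref{mainthm}, minus atomlessness) to lattice-embed $L$ into a Boolean algebra, and then Stone's representation theorem to land in an algebra of sets. The only implicit step in both your argument and the paper's is that finitely many elements of a distributive lattice generate a \emph{finite} sublattice, which holds because the free distributive lattice on finitely many generators is finite.
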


\begin{proof}
We've already proven Birkhoff's representation theorem, which says that every finite distributive lattice is isomorphic to a lattice of sets. Hence every finite distributive lattice can be lattice embedded into a BA. An application of the compactness theorem, similar to the proof of theorem \ref{mainthm}, gives that any distributive lattice can be lattice embedded into a BA. But by Stone's representation theorem, this BA is isomorphic to an algebra of sets. Hence any distributive lattice can be lattice embedded into an lattice of sets.
\end{proof}

One can also prove directly that every distributive lattice is isomorphic to a lattice of sets, and deduce Stone's representation theorem as a corollary. Such an approach is given in \cite{gratzer}.

\bibliographystyle{plain}
\bibliography{embeddings}

\end{document}